\newtheorem{theorem}{Theorem}[section]
\newtheorem{lemma}[theorem]{Lemma}
\theoremstyle{definition}
\newtheorem{definition}[theorem]{Definition}
\newtheorem{corollary}[theorem]{Corollary}
\theoremstyle{remark}
\numberwithin{equation}{section}
\begin{document}

\title[ $\beta$-almost solitons on almost co-k\"{a}hler manifolds]{$\beta$-almost solitons on almost co-k\"{a}hler manifolds}

%    Information for first author
\author{Shahroud Azami}
%    Address of record for the research reported here
\address{Department of Mathematics, Faculty of Science
Imam Khomeini International University,
Qazvin, Iran. }
%    Current address

\email{azami@sci.ikiu.ac.ir}
%    \thanks will become a 1st page footnote.

%    Information for second author

%\thanks{Support information for the second author.}

%    General info
\subjclass[2010]{53C15, 53C25,  53C44}

%\date{April  6, 2013 .}

%\dedicatory{This paper is dedicated to our advisors.}

\keywords{Ricci soliton, Yamabe soliton, Contact manifold, Almost co-K\"{a}hler manifold.}
%%%%%%%%%%%%%%%%%%%%%%%%%%%%%%%%%%%%%%%%%%%%%%%%%%%%%%%%%%%%%%%%%%%%%%%%%%%%%%%%%%%%%%%%%%%%%%%%%%%%%%%%%%%%%%%%%%%%%%%%%%%%%%%%%%%%%%%%%
\begin{abstract}
The object of the present paper is to study $\beta$-almost Yamabe solitons and  $\beta$-almost Ricci solitons on almost co-K\"{a}hler manifolds. In this paper, we prove that  if an almost co-K\"{a}hler manifold  $M$ with the Reeb vector field $\xi$ admits a $\beta$-almost Yamabe solitons  with the potential vector field  $\xi$  or $b\xi$, where $b$ is  a smooth function then manifold is $K$-almost co-K\"{a}hler manifold or the soliton is trivial, respectively. Also, we show if a  closed $(\kappa,\mu)$-almost co-K\"{a}hler manifold  with $n>1$ and $\kappa<0$ admits a $\beta$-almost Yamabe soliton    then the soliton is trivial and expanding. Then  we study
an   almost co-K\"{a}hler manifold    admits a $\beta$-almost Yamabe soliton or  $\beta$-almost Ricci  soliton  with $V$ as the potential vector field, $V$ is a  special geometric   vector field.
\end{abstract}

\maketitle
%%%%%%%%%%%%%%%%%%%%%%%%%%%%%%%%%%%%%%%%%%%%%%%%%%%%%%%%%%%%%%%%%%%%%%%%%%%%%%%%%%%%%%%%%%%%%%%%%%%%%%%%%%%%%%%%%%%%%%%%%%%%%%%%%%%%%%
%%%%%%%%%%%%%%%%%%%%%%%%%%%%%%%%%%%%%%%%%%%%%%%%%%%%%%%%%%%%%%%%%%%%%%%%%%%%%%%%%
\section{Introduction}
Over the last few years,  the geometric  flows have been an interesting topic of active research in both  mathematics and physics.  The Ricci flow was introduced by Hamilton  \cite{H}, which it is an evolution equation for metrics on a Riemannian manifold defined as follows
\begin{equation}
\frac{\partial g}{\partial t}=-2S, \,\,\,\,\,\,\,\,\ g(0)=g_{0},
\end{equation}
where $S$ denotes the Ricci tensor. A Ricci soliton $(M,g, V, \lambda)$ on a Riemannian manifold $(M,g)$ is a special solution  to the Ricci flow,  a generalization of  an Einstein metric and is defined by
\begin{equation}\label{rs}
\mathcal{L}_{V}g+2S+2\lambda g=0,
\end{equation}
where $\mathcal{L}_{V}$ is the Lie derivative operator  along the vector field $V$ (called the potential vector field )  on  $M$ and $\lambda$ is a real number. The Ricci soliton is said to be shrinking, steady, or expanding  according as $\lambda$ is negative, zero,  and positive, respectively.  If the vector field $V$ is the gradient of a potential function $f$, then  $g$ is called a gradient Ricci soliton.
If in Ricci soliton (\ref{rs}), $\lambda$ be a function $\lambda:M\to \mathbb{R}$ then it called almost  Ricci soliton. In \cite{SP, GHA} have been investigated the almost Ricci soliton. Also, if there exist a function  $\beta:M\to \mathbb{R}$ such that
\begin{equation}\label{ars}
\beta\mathcal{L}_{V}g+2S+2\lambda g=0 ,
\end{equation}
 then this soliton is  called $\beta$-almost Ricci soliton which studied in \cite{GHA, JNG}. The $\beta$-almost Ricci soliton  is called expanding, steady  and shrinking when $\lambda >0$, $\lambda=0$ or $\lambda<0$, respectively. A $\beta$-almost Ricci soliton is said to be trivial if the potential vector field $V$ is  homotherhetic, i.e., $\mathcal{L}_{V}g=cg$, for some constant $c$. Otherwise, it is called non-trivial.
 Similarly, a Riemannian manifold $(M,g)$  is said to be  $\beta$-almost Yamabe soliton  if there exist a vector field $V$ on $M$  (called the potential vector field ),  a soliton function $\lambda: M\to \mathbb{R}$  and a smooth function  $\beta:M\to \mathbb{R}$ such that
\begin{equation}\label{ys}
\beta\mathcal{L}_{V}g=(\lambda-r)g,
\end{equation}
where $r$ is scalar curvature of $M$ respect to metric $g$. When the function $\lambda$  in (\ref{ars}) (  and in (\ref{ys}) ) is constant  we simply say that it is  a  $\beta$-Ricci soliton (   a  $\beta$-Yamabe soliton). The $\beta$-almost Yamabe soliton  is called expanding, steady or shrinking when $\lambda <0$, $\lambda=0$ or $\lambda>0$, respectively. A $\beta$-almost Yamabe soliton is said to be trivial if the potential vector field $V$ is  Killing, i.e., $\mathcal{L}_{V}g=0$.
We say that  $\beta$ is defined signal whenever either $\beta>0$ on $M$ or $\beta<0$ on $M$.
 During the last two decades, the geometry of Ricci soliton and other solitons have been  the focus of  attention of many mathematicians and  physicists.  For  theoretical physicists the  Ricci solitons are as quasi Einstein metrics and they have been  looking into the equation of geometric sloitons in relation with topics of physics as String theory and general relativity \cite{F, W}.\\

In contact geometry,  gradient Ricci soliton have been studied by Sharma \cite{S} as a $K$-contact and by Ghosh et al. \cite{G}
as a  $(\kappa, \mu)$-metric. In \cite{C} Cho et al. and in \cite{KH} Hui et al. studied $\eta$-Ricci solitons on  real hypersurfaces in a non-flat complex space form and on  $\eta$-Einstein Kenmotsu manifolds, respectively. Also, in \cite{JS} Suh et al. investigated  the Yamabe solitons and Ricci solitons on almost co-K\"{a}hler manifolds. In \cite{DKP}, Kar and Majhi studied $\beta$-almost Ricci soliton on almost co-K\"{a}hler manifolds with $\xi$ belong  to $(\kappa, \mu)$-nullity distribution.\\

Motivated by the above studies the object of present paper is to study $\beta$-almost Yamabe solitons and  $\beta$-almost Ricci solitons on almost co-K\"{a}hler manifolds and we  generalize the results of  \cite{JS}  and also we obtain some other results of these  solitons  on almost co-K\"{a}hler manifolds when the potential vector field of solitons satisfies in certain conditions. This paper is organized as follows. In section \ref{s2}, after a brief introduction, we study almost co-K\"{a}hler manifolds and give some formula that will be used in the next sections. In section \ref{s3}, we consider $\beta$-almost Yamabe solitons on almost co-K\"{a}hler manifolds and prove if an almost co-K\"{a}hler manifold  $M$ with the Reeb vector field $\xi$ admits a $\beta$-almost Yamabe solitons  with the potential vector field  $\xi$  or $b\xi$, where $b$ is  a smooth function, and $\beta$ is defined signal, then manifold $M$ is $K$-almost co-K\"{a}hler manifold or the soliton is trivial, respectively. Also we prove  several important results about the geometric fields and $\beta$-almost Yamabe solitons on  almost co-K\"{a}hler manifolds.  In follow of this section, we study  $\beta$-almost Yamabe solitons on $(\kappa, \mu)$-almost co-K\"{a}hler manifolds. In the last section, we consider $\beta$-almost Ricci solitons  with geometric vector fields on  almost co-K\"{a}hler manifolds.
%%%%%%%%%%%%%%%%%%%%
\section{Preliminaries}\label{s2}
In this section, we give some well known definitions and formulae on almost co-K\"{a}hler manifolds which will be useful in the later sections.
A smooth $(2n+1)$ dimensional Riemannian manifold $(M,g)$ is said to admit an almost  contact metric structures $(\phi,\xi, \eta, g)$, if it admits a $(1,1)$ tensor field $\phi$, a unit vector field $\xi$ (called the Reeb vector field), and  a $1$-form $\eta$ satisfying \cite{EB, DEB},
\begin{equation}\label{fc}
\phi^{2}=-I+\eta\otimes \xi,\,\,\,\,\,\,\eta(\xi)=1,\,\,\,\,\,\,\,\phi\xi=0,\,\,\,\,\eta\circ\phi=0,
\end{equation}
and
\begin{equation}\label{fc1}
g(\phi X, \phi Y)=g(X,Y)-\eta(X)\eta(Y),
\end{equation}
or equivalently
\begin{equation}\label{fc2}
g(\phi X, Y)=-g(X,\phi Y)\,\,\,\,\,\,\,\text{and}\,\,\,\,\,\,g(X,\xi)=\eta(X),
\end{equation}
for all vector fields $X,Y$ on $M$. For an almost contact metric manifold  $(M^{2n+1},\phi,\xi, \eta, g)$, we can always define a $2$-form $\Phi$ as $\Phi(X,Y)=g(x,\phi Y)$. An almost contact metric structure becomes a contact metric structure if $\Phi=d\eta$. In this case, $1$-form $\eta$ is a contact form, $\xi$ is its characteristic vector field, and  $\Phi$ is the fundamental $2$-form. If, in addition $\xi$ is a Killing vector field, then $M^{2n+1}$ is  said to be $K$-contact manifold.  The almost contact metric structure is said to be normal if $[\phi,\phi]=-2d\eta\otimes\xi$  where
\begin{equation*}
[\phi,\phi](X,Y)=[\phi X,\phi Y]+\phi^{2}[X,Y]-\phi[\phi X,Y]-\phi[X,\phi Y],
\end{equation*}
for any vector fields $X,Y$ on $M$.  A normal almost contact metric manifold is said to be Sasakian, that is an almost contact metric manifold is Sasakian if and only if
\begin{equation*}
(\nabla_{X}\phi)Y=g(X,Y)\xi-\eta(Y)X,
\end{equation*}
or equivalently
\begin{equation*}
R(x,y)\xi=\eta(Y)X-\eta(X)Y,
\end{equation*}
for any vector fields $X,Y$ on $M$ (see \cite{DEB}). An almost contact metric manifold $(M^{2n+1},\phi,\xi, \eta, g)$ is said to be almost co-K\"{a}hler manifold \cite{DEB, BC} if  both the $1$-form $\eta$ and  the $2$-form $\Phi$ are closed. If, in addition the associated almost contact structure is normal, which is  also equivalent to $\nabla \Phi=0$, or equivalently  $\nabla \phi=0$, then $M$ is said to be co-K\"{a}hler manifold. There exists some examples  of  (almost) co-K\"{a}hler manifolds, for instance, the Riemannian product  of a real line  and a (almost) K\"{a}hler manifold admits a (almost) co-K\"{a}hler structure (\cite{DC, CM, ZO, ZOL}). On an almost co-K\"{a}hler manifold $(M^{2n+1},\phi,\xi,\eta, g)$ we set $h=\frac{1}{2}\mathcal{L}_{\xi}\phi$ and $h'=h\circ \phi$. Then the following formulas also hold for a almost co-K\"{a}hler manifold \cite{PD, HE, ZO, ZOL}
\begin{eqnarray}\label{ck1}
&&h\xi=0,\,\,\,\,\,\,\,\,\,h\phi+\phi h=0,\,\,\,\,\,\,\,tr h=tr h',\\\label{ck2}
&&\nabla_{\xi}\phi=0,\,\,\,\,\nabla\xi=h',\,\,\,\,\,\,\,\,\,\,\, \,\,\,\,\,\,\,div \xi=0,\\\label{ck3}&&S(\xi, \xi)+||h||^{2}=0.
\end{eqnarray}
If, in addition, we put $l=R(.,\xi)\xi$, then we also get $\phi l\phi -l=2h^{2}$, where $R$ is the Riemannian curvature tensor. From the second term of (\ref{ck1}) it is easy to see that
\begin{equation}\label{ck4}
(\mathcal{L}_{\xi}g)(X,Y)=2g(h'X,Y),
\end{equation}
for any vector fields $X,Y$ on $M$. Therefore, the Reeb vector field $\xi$ on almost co-K\"{a}hler manifold  is Killing if and only if the $(1,1)$ tensor field $h$  vanishes. An almost co-K\"{a}hler manifold is said to be a $K$-almost co-K\"{a}hler manifold if the Reeb vector field $\xi$   is Killing.\\
A contact metric manifold  $(M^{2n+1},\phi,\xi,\eta, g)$ whose curvature tensor satisfies
\begin{equation}\label{n1}
R(X,Y)\xi=\kappa\left[ \eta(Y)X-\eta(X)Y\right]+\mu\left[  \eta(Y)hX-\eta(X)hY\right],
\end{equation}
for any vector fields $X,Y$ on $M$  and $\kappa, \mu\in\mathbb{R}$ is called $(\kappa,\mu)$-contact manifold and $\xi$ is said to belong to the $(\kappa,\mu)$-nullity distribution. Similarly, we have
\begin{definition}
An almost co-K\"{a}hler manifold $(M^{2n+1},\phi,\xi,\eta, g)$  is said to be  a  $(\kappa,\mu)$-almost co-K\"{a}hler manifold if the Reeb vector field $\xi$  satisfies the equation (\ref{n1}).
\end{definition}
In a consequence of  (\ref{n1}), we  obtain
\begin{equation}\label{q}
S(X,\xi)=2n\kappa \eta(X),
\end{equation}
and $Q\xi=2n\kappa\xi$, where  $Q$ is the Ricci operator defined by $g(QX,Y)=S(X,Y)$.
%%%%%%%%%%%%%%%%%%%%%%
\section{$\beta$-almost Yamabe solitons on almost co-K\"{a}hler manifolds}\label{s3}
In this section we study   $\beta$-almost Yamabe solitons on almost co-K\"{a}hler manifolds $(M^{2n+1},\phi,\xi,\eta, g)$.  If  we assume that the potential vector field of soliton be $\xi$, then we have:
\begin{theorem}\label{t1}
If an almost co-K\"{a}hler manifold $(M^{2n+1},\phi,\xi,\eta, g)$ admits a $\beta$-almost Yamabe soliton  with $\xi$ as the potential vector field and $\beta$ is defined signal, then manifold $M$ is $K$-almost co-K\"{a}hler manifold.
\end{theorem}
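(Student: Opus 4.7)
The plan is to substitute the potential vector field $V = \xi$ into the $\beta$-almost Yamabe soliton equation (\ref{ys}) and extract pointwise information by evaluating at $\xi$ itself, and then use the sign hypothesis on $\beta$ to conclude that the tensor $h$ vanishes.

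\textbf{Step 1.} With $V = \xi$, (\ref{ys}) reads
\begin{equation*}
\beta(\mathcal{L}_\xi g)(X,Y) = (\lambda - r)\, g(X,Y).
\end{equation*}
Using the identity (\ref{ck4}), namely $(\mathcal{L}_\xi g)(X,Y) = 2g(h'X,Y)$, this becomes
\begin{equation*}
2\beta\, g(h'X,Y) = (\lambda - r)\, g(X,Y).
\end{equation*}

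\textbf{Step 2.} Evaluate both sides at $X = Y = \xi$. Since $h' = h\circ\phi$ and $\phi\xi = 0$ by (\ref{fc}), we have $h'\xi = h(\phi\xi) = 0$, so the left-hand side vanishes, while $g(\xi,\xi) = 1$. Hence $\lambda = r$ on $M$, and the displayed identity simplifies to
\begin{equation*}
2\beta\, g(h'X,Y) = 0 \qquad \text{for all } X,Y.
\end{equation*}

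\textbf{Step 3.} Here the sign hypothesis enters: because $\beta$ is defined signal, $\beta$ is nowhere zero on $M$, so we may divide and conclude $g(h'X,Y) = 0$ for all $X,Y$, i.e., $h' \equiv 0$.

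\textbf{Step 4.} Finally, it remains to upgrade $h' = 0$ to $h = 0$. Using $\phi^2 = -I + \eta\otimes\xi$ from (\ref{fc}) and the fact $h\xi = 0$ from (\ref{ck1}), for any vector field $Y$ we may write
\begin{equation*}
hY = -h(\phi^2 Y) + \eta(Y) h\xi = -h'(\phi Y) = 0,
\end{equation*}
so $h = 0$ identically, and $\xi$ is Killing by (\ref{ck4}); that is, $M$ is $K$-almost co-K\"ahler.

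The argument is essentially mechanical once one notices the key simplification at $X=Y=\xi$; the only place where care is needed is Step 3, where the assumption that $\beta$ has a definite sign (hence nowhere vanishes) is indispensable — without it, one could only conclude $\beta h' = 0$, which permits $h'$ to be nonzero on the zero set of $\beta$. I do not anticipate a serious obstacle beyond this point.
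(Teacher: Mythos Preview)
Your argument is correct. The paper's own proof reaches the same conclusion $\lambda=r$ by a slightly different extraction: instead of evaluating at $X=Y=\xi$, it traces the soliton equation $\mathcal{L}_{\xi}g=\frac{\lambda-r}{\beta}\,g$ to obtain $2\,\operatorname{div}\xi=(2n+1)\frac{\lambda-r}{\beta}$ and then invokes $\operatorname{div}\xi=0$ from (\ref{ck2}). From $\lambda=r$ the paper concludes directly that $\mathcal{L}_{\xi}g=0$, i.e.\ $\xi$ is Killing; your Steps~3--4 reach the same endpoint via $h'=0\Rightarrow h=0$, which is a harmless detour since by (\ref{ck4}) the vanishing of $\mathcal{L}_{\xi}g$ is already equivalent to $h'=0$. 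Both routes are elementary and of the same length; yours avoids the divergence identity, the paper's avoids the algebra with $\phi^2$.
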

\begin{proof}
%[Proof of Theorem \ref{t1}]
Let  in almost co-K\"{a}hler manifold $(M^{2n+1},\phi,\xi,\eta, g)$ the metric $g$  be  a $\beta$-almost Yamabe soliton  with  the potential vector field $\xi$. Then, we have $\mathcal{L}_{\xi}g=\frac{\lambda -r}{\beta}g$. By take trace of both-sides of last identity we get $2 div \xi=\frac{\lambda -r}{\beta}(2n+1)$. Since $div \xi=0$, we conclude $\frac{\lambda -r}{\beta}=0$, this means  $\xi$ is  a Killing vector field. This completes the proof of Theorem.
\end{proof}
%%%%%%%%%%%%%%%%%%%%%%%%
Any three dimensional almost co-K\"{a}hler manifold is co-K\"{a}hler manifold  if and only if  it is $K$-almost co-K\"{a}hler manifold  \cite{ZO}. Hence, we get the following result.
\begin{corollary}\label{c1}
If an almost co-K\"{a}hler manifold $(M^{3},\phi,\xi,\eta, g)$ admits a $\beta$-almost Yamabe soliton  with $\xi$ as the potential vector field and $\beta$ is defined signal, then manifold $M$ is a  co-K\"{a}hler manifold.
\end{corollary}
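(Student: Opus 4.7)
The plan is to reduce this corollary directly to Theorem \ref{t1} and the known rigidity result in dimension three cited just before the statement. First I would apply Theorem \ref{t1} verbatim: the same hypotheses (almost co-K\"ahler, $\beta$-almost Yamabe with potential vector field $\xi$, and $\beta$ of defined sign) hold here, with the only extra information being that the dimension is $2n+1=3$. Theorem \ref{t1} therefore gives that $M$ is $K$-almost co-K\"ahler, i.e., $\xi$ is Killing, equivalently $h'=0$ by (\ref{ck4}), equivalently $h=0$ by (\ref{ck1}).

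The second and final step is to invoke the cited equivalence from \cite{ZO}: on a three-dimensional almost co-K\"ahler manifold, the $K$-almost co-K\"ahler condition is equivalent to the co-K\"ahler condition ($\nabla \phi=0$). This equivalence is specific to dimension three and does not need to be reproved; it is the sentence the author places immediately before the corollary, so it can be quoted as a black box. Combining these two steps yields that $M$ is co-K\"ahler, which is exactly the conclusion.

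There is no serious obstacle: the substantive analytic step (tracing the soliton equation and using $\operatorname{div}\xi=0$ together with $\beta$ being of defined sign) has already been carried out in the proof of Theorem \ref{t1}, and the dimension-three upgrade is a cited fact. Thus the proof should be essentially a one-line argument, reading something like: ``By Theorem \ref{t1}, $M$ is $K$-almost co-K\"ahler, and since any three-dimensional $K$-almost co-K\"ahler manifold is co-K\"ahler \cite{ZO}, the conclusion follows.''
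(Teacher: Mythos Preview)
Your proposal is correct and matches the paper's own argument exactly: the corollary is obtained by applying Theorem \ref{t1} to conclude that $M$ is $K$-almost co-K\"{a}hler, and then invoking the cited three-dimensional equivalence from \cite{ZO} to upgrade this to co-K\"{a}hler. No additional ideas are needed.
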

%%%%%%%%%%%%%%%%%%%%%
Now, if we assume that the potential vector field of  $\beta$-almost Yamabe soliton   is  pointwise  collinear  with the Reeb vector field, then we have the following theorem.
\begin{theorem}\label{t2}
If an almost co-K\"{a}hler manifold $(M^{2n+1},\phi,\xi,\eta, g)$ admits a $\beta$-almost Yamabe soliton  with $b\xi$ as the potential vector field where $b$ is non-zero smooth function and $\beta$ is defined signal, then the soliton is trivial.
\end{theorem}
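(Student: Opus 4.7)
The plan is to contract the $\beta$-almost Yamabe soliton equation in two different ways and compare. First I would expand
\begin{equation*}
(\mathcal{L}_{b\xi} g)(X,Y) = (Xb)\eta(Y) + (Yb)\eta(X) + 2b\, g(h'X,Y),
\end{equation*}
using (\ref{ck2}) together with the identity (\ref{ck4}) for $\mathcal{L}_\xi g$. Substituting this into the defining equation (\ref{ys}) gives
\begin{equation*}
\beta\bigl[(Xb)\eta(Y) + (Yb)\eta(X) + 2b\, g(h'X,Y)\bigr] = (\lambda-r)\, g(X,Y).
\end{equation*}

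Next, I would take the trace with respect to $g$. Since $\operatorname{tr} h' = \operatorname{tr} h = 0$ (which follows from $h\phi + \phi h = 0$ together with $\operatorname{tr} h = \operatorname{tr} h'$ in (\ref{ck1})), the trace yields $2\beta\,\xi(b) = (2n+1)(\lambda-r)$. On the other hand, evaluating the same equation at $X=Y=\xi$ and using $h'\xi=0$ together with the symmetry of $h'$ gives $2\beta\,\xi(b) = \lambda - r$. Comparing these two expressions forces $2n(\lambda-r)=0$, hence $\lambda = r$ and therefore $\xi(b) = 0$ since $\beta$ is of definite sign.

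Now setting only $Y=\xi$ in the soliton equation and using $\xi(b)=0$ and $\lambda-r=0$ yields $\beta\,(Xb) = 0$ for every $X$; as $\beta$ is defined signal, $b$ must be a (non-zero) constant. Plugging $b$ constant back into the expanded soliton equation reduces it to $2\beta b\, g(h'X,Y) = 0$, so $h'=0$. Since $\phi$ is an isomorphism on the contact distribution $\ker\eta$ and $h\xi = 0$, this forces $h=0$, i.e.\ $\xi$ is Killing by (\ref{ck4}), and hence so is $V = b\xi$. Therefore the soliton is trivial.

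I expect the only subtlety to be the bookkeeping around $h'$: one has to verify that $h'$ is symmetric (a quick consequence of $h\phi+\phi h=0$ and the symmetry of $h$) so that $g(h'X,\xi)=g(X,h'\xi)=0$, and that $\operatorname{tr} h' = 0$ so the trace step cleanly separates $\xi(b)$ from the curvature term. Everything else is algebraic manipulation driven by the two scalar identities obtained from the trace and the $X=Y=\xi$ specialization.
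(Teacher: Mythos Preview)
Your proof is correct and follows the same strategy as the paper: expand $\mathcal{L}_{b\xi}g$, trace, and evaluate at $X=Y=\xi$ to force $\lambda=r$. The paper then concludes directly from $\beta\,\mathcal{L}_{b\xi}g=(\lambda-r)g=0$ that $b\xi$ is Killing, whereas you take an extra (correct but unnecessary) detour to deduce that $b$ is constant and $h=0$ before reaching the same conclusion.
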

\begin{proof}
%[Proof of Theorem \ref{t2}]
Using (\ref{ck2}) we can write
\begin{equation*}
\nabla_{X}(b\xi)=X(b)\xi+b\nabla{X}\xi=X(b)\xi+bh'X,
\end{equation*}
for any vector field $X$ on $M$. On the other hand, the metric $g$  is  a $\beta$-almost Yamabe soliton with  the potential vector field $b\xi$, then we  have $\beta \mathcal{L}_{b\xi}g=(\lambda -r)g$ and this implies
\begin{eqnarray}\nonumber
(\lambda -r)g(X,Y)&=&\beta g(\nabla_{X}(b\xi),Y)+\beta g(X,\nabla_{Y}(b\xi))\\\label{1}
&=&\beta X(b)\eta(Y)+\beta Y(b)\eta(X)2\beta b g(h'X,Y).
\end{eqnarray}
For each point $p$ in $M$, we consider a local $\phi$-basis  $\{e_{i}:\,\, 1\leq i\leq 2n+1\}$ on the tangent space $T_{p}M$. Since
\begin{equation*}
\sum_{i=1}^{2n+1}g(h'e_{i},e_{i})=\sum_{i=1}^{2n+1}g(\nabla_{e_{i}}\xi,e_{i})=div \xi=0,
\end{equation*}
taking  $X=Y=e_{i}$ in (\ref{1}) and summing over $i$, we derive
\begin{equation}\label{2}
(\lambda -r)(2n+1)=2\beta \xi(b).
\end{equation}
Again, substituting $X=Y=\xi$ in (\ref{1}), we get
\begin{equation}\label{3}
(\lambda -r)=2\beta \xi(b).
\end{equation}
Equations (\ref{2})  and (\ref{3}) yield $\lambda=r$. Putting  $\lambda=r$ in $\beta \mathcal{L}_{b\xi}g=(\lambda -r)g$ , we obtain $ \mathcal{L}_{b\xi}g=0$ . Thus, $b\xi$ is a Killing vector field and the soliton is trivial.
\end{proof}
%%%%%%%%%%%%%%%%%%%%%%%%
Now we state the following lemma, which will be used in the next results  when in a a $(\kappa,\mu)$-almost co-K\"{a}hler manifold  with $n>1$ constant $\kappa$ is negative.
\begin{lemma}(\cite{YW})\label{l1}
Let $(M^{2n+1},\phi,\xi,\eta, g)$ be a $(\kappa,\mu)$-almost co-K\"{a}hler manifold  with $n>1$ and $\kappa<0$. Then  the Ricci operator is given by
 \begin{equation}\label{y1}
Q=\mu h+2n\kappa \eta\otimes \xi,
\end{equation}
where $\kappa$ is a  constant and $\mu$ is a smooth function satisfying $d\mu\wedge \eta=0$.
\end{lemma}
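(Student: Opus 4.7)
The plan is to proceed in three stages: first, extract the algebraic structure of $h$ from the $(\kappa,\mu)$-nullity condition; second, apply the second Bianchi identity to force $\kappa$ to be constant and $\mu$ to be constant on the contact distribution; third, compute $Q$ directly. For the first stage, plugging $Y=\xi$ into (\ref{n1}) and using $h\xi=0$ gives
\[
lX \;:=\; R(X,\xi)\xi \;=\; \kappa\bigl(X-\eta(X)\xi\bigr) + \mu\, hX .
\]
Substituting this into the identity $\phi l\phi - l = 2h^{2}$ recorded just before (\ref{ck4}) and using (\ref{fc}), the anticommutation $h\phi+\phi h=0$, and the symmetry of $h$ (which forces $\eta\circ h=0$), a short calculation gives
\[
h^{2} \;=\; -\kappa\,(I-\eta\otimes\xi).
\]
Since $\kappa<0$, the operator $h$ has constant eigenvalues $\pm\sqrt{-\kappa}$ on $\mathcal{D}:=\ker\eta$, and its two eigenspaces are interchanged by $\phi$.

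For the second stage, I would substitute the $(\kappa,\mu)$-expression for $R(\cdot,\cdot)\xi$ into the second Bianchi identity
\[
(\nabla_{Z}R)(X,Y)\xi + (\nabla_{X}R)(Y,Z)\xi + (\nabla_{Y}R)(Z,X)\xi \;=\; 0,
\]
expanding each covariant derivative through $\nabla_{Z}\xi = h'Z$ from (\ref{ck2}). The result is a tensorial relation involving $Z(\kappa)$, $Z(\mu)$ and $(\nabla_{Z}h)$. Specializing one argument to $\xi$ and then eliminating the quadratic $h$-terms via the Stage 1 identity $h^{2}=-\kappa(I-\eta\otimes\xi)$ produces coefficient equations on $TM$; because $n>1$ and hence $\dim\mathcal{D}\ge 4$, there is enough room in $\mathcal{D}$ to solve these relations and conclude $X(\kappa)=0$ for every $X\in TM$ and $X(\mu)=0$ for every $X\in\mathcal{D}$. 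The latter is exactly $d\mu\wedge\eta=0$.

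For the third stage, the identity $\nabla\xi = h'$ yields $(\nabla_{X}h')Y - (\nabla_{Y}h')X = R(X,Y)\xi$, which, combined with the already-known formula for $R(X,Y)\xi$ and the $h$-eigenspace decomposition of $\mathcal{D}$ from Stage 1, produces a commutation relation between $Q$ and $\phi$ on $\mathcal{D}$. Tracing this against the eigenvectors of $h$ and invoking (\ref{q}) together with $Q\xi = 2n\kappa\xi$ then forces $QX=\mu hX$ for $X\in\mathcal{D}$, so that
\[
Q \;=\; \mu h + 2n\kappa\,\eta\otimes\xi.
\]
The main obstacle is Stage 2: the Bianchi expansion produces a proliferation of $\phi h$-terms that must be cancelled against each other carefully, and the hypothesis $n>1$ enters precisely to upgrade the easy conclusion $\xi(\kappa)=0$ to the full constancy of $\kappa$; in the borderline case $n=1$ the contact distribution is too small for the coefficient system to be rigid, and additional special behaviour appears.
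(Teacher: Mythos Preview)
The paper does not supply a proof of this lemma: it is simply quoted from \cite{YW} and used as a black box in the proof of Theorem~\ref{t3}. So there is nothing in the paper to compare your argument against directly.

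That said, your outline is the standard route taken in the source you are reproducing. Stage~1 is clean and correct: plugging $lX=\kappa(X-\eta(X)\xi)+\mu hX$ into $\phi l\phi - l = 2h^{2}$ and using $h\phi+\phi h=0$ together with $\eta\circ h=0$ gives $h^{2}=-\kappa(I-\eta\otimes\xi)$ without difficulty, and the hypothesis $\kappa<0$ then forces the $\pm\sqrt{-\kappa}$ eigenspace decomposition of $\mathcal{D}$. Stage~2 is where all the work lies, and your description is honest about that; the Bianchi expansion via $\nabla\xi=h'$ does produce the coefficient system you describe, and the dimension count $n>1$ is exactly what is needed to make those equations overdetermined enough to force $d\kappa=0$ and $d\mu\wedge\eta=0$. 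Your Stage~3 is somewhat compressed: the phrase ``commutation relation between $Q$ and $\phi$'' hides the actual mechanism, which is that one first obtains an explicit formula for $\nabla h$ (equivalently $\nabla h'$) on the eigenbundles of $h$, then contracts the full curvature tensor to read off $Q$ on each eigenbundle separately. If you carry this out you will find $QX=\mu hX$ for $X\in\mathcal{D}$, matching (\ref{y1}) after adding the $\xi$-component from (\ref{q}). None of this is wrong, but as written Stage~3 is a signpost rather than a proof; a referee would ask you to exhibit the $\nabla h$ formula explicitly before accepting the final trace.
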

%%%%%%%%%%%%%%%%%%%%%
\begin{theorem}\label{t3}
If a  closed $(\kappa,\mu)$-almost co-K\"{a}hler manifold $(M^{2n+1},\phi,\xi,\eta, g)$  with $n>1$ and $\kappa<0$ admits a $\beta$-almost Yamabe soliton   and $\beta$ is defined signal, then the soliton is trivial and expanding.
\end{theorem}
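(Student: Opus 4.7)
The plan is to exploit the $(\kappa,\mu)$-structure to show that the scalar curvature is a negative constant, which converts the $\beta$-almost Yamabe equation into a conformal Killing equation whose conformal potential satisfies an eigenvalue equation incompatible with the sign of $\kappa$ on a closed manifold.

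First I would use Lemma~\ref{l1}, which gives $QX=\mu hX+2n\kappa\,\eta(X)\xi$. Tracing in a local orthonormal frame, and noting that $\mathrm{tr}\,h=0$ (since $h\phi+\phi h=0$ and $h\xi=0$ force the nonzero eigenvalues of $h$ to come in $\pm$ pairs), yields $r=2n\kappa$, a negative constant. The equation (\ref{ys}) then reads $\mathcal{L}_V g=2\sigma g$ with $\sigma:=(\lambda-r)/(2\beta)$, which is well defined because $\beta$ is defined signal; thus $V$ is a conformal vector field on $M$ with conformal potential $\sigma$.

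Next I would invoke the standard identity
\begin{equation*}
V(r)+2\sigma r+2(m-1)\Delta\sigma=0,\qquad m=2n+1,
\end{equation*}
obtained by differentiating the conformal change rule $r(e^{2u}g)=e^{-2u}\bigl(r-2(m-1)\Delta u-(m-1)(m-2)|\nabla u|^2\bigr)$ at $u=t\sigma$. Since $r$ is constant, $V(r)=0$, and substituting $r=2n\kappa$ and $m-1=2n$ reduces the identity to $\Delta\sigma=-\kappa\sigma$. Because $M$ is closed, multiplying by $\sigma$ and integrating by parts gives
\begin{equation*}
-\int_M|\nabla\sigma|^2\,dV=\int_M\sigma\,\Delta\sigma\,dV=-\kappa\int_M\sigma^2\,dV,
\end{equation*}
and since $\kappa<0$ the right-hand side is $\ge 0$ while the left-hand side is $\le 0$, forcing $\sigma\equiv 0$. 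Hence $\mathcal{L}_V g=0$ (so the soliton is trivial) and $\lambda=r=2n\kappa<0$ (so it is expanding), which is the assertion.

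The step I expect to require the most care is the derivation of the eigenvalue equation $\Delta\sigma=-\kappa\sigma$; once it is in hand, the spectral argument on the closed manifold closes the proof immediately. If one prefers to avoid the conformal variation formula for $r$, the same equation can be obtained by taking the divergence of (\ref{ys}) viewed as a $(0,2)$-tensor (using that $r$ is constant, so the right-hand side has divergence $d\lambda$) and combining with the trace identity $\mathrm{div}\,V=(2n+1)\sigma$ via a Bochner-type formula for $V$; both routes collapse to the same conclusion.
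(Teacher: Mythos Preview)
Your argument is correct and follows essentially the same route as the paper: compute $r=2n\kappa$ from Lemma~\ref{l1}, use the conformal-vector-field identity (the paper cites Yano's integral formula $\mathcal{L}_{V}r=-\rho r-2n\Delta\rho$ with $\rho=2\sigma$) to obtain the eigenvalue equation, and then integrate against $\sigma$ on the closed manifold to force $\sigma\equiv 0$. The only differences are cosmetic normalizations ($\sigma$ versus $\rho=2\sigma$) and your more explicit justification that $\mathrm{tr}\,h=0$.
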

\begin{proof}
%[Proof of Theorem \ref{t3}]
Let the metric $g$ of $(\kappa,\mu)$-almost co-K\"{a}hler manifold \linebreak $(M^{2n+1},\phi,\xi,\eta, g)$ be   a $\beta$-almost Yamabe soliton with  the potential vector field $V$, then we  have $\beta \mathcal{L}_{V}g=(\lambda -r)g$. Set $\rho:=\frac{\lambda -r}{\beta}$, from \cite{KY} we have
\begin{equation}\label{4}
\mathcal{L}_{V}r=-\rho r-2n \Delta \rho,
\end{equation}
where $\Delta=div \,grad$  denotes the Laplace operator of $g$. On the other hand, from (\ref{y1}) of lemma \ref{l1} we derive
\begin{equation}\label{5}
S(X,Y)=\mu g(hX,Y)+2n\kappa\eta(X)\eta(Y),
\end{equation}
for any vector fields $X,Y$ on $M$. Consider a local $\phi$-basis  $\{e_{i}:\,\, 1\leq i\leq 2n+1\}$ on the tangent space $T_{p}M$. Putting $X=Y=e_{i}$ in (\ref{5}) and summing over $i$, $1\leq i\leq 2n+1$, we conclude $r=2n\kappa$. Hence the scalar curvature $r$ is constant and negative. Thus $\mathcal{L}_{V}r=0$ and equation (\ref{4}) implies that $-\Delta \rho=\frac{r}{2n}\rho$. Multiplying both sides of this equation in function $\rho$,  integrating over $M$ and using divergence theorem we obtain
\begin{equation}\label{6}
\int_{M}|\nabla \rho|^{2}\Omega=\frac{r}{2n}\int_{M} \rho^{2}\Omega,
\end{equation}
where $\Omega$ is the volume form of $M$.
If $\rho\neq 0$ the(\ref{6}) implies  $r$ is positive and this is a contradiction. Therefore, $\rho=0$, i.e. $\lambda=r$. So, the soliton is trivial and expanding.
\end{proof}
%%%%%%%%%%%%%%%%%%%%%%%%
%%%%%%%%%%%%%%%%%%%%%%%%
Now we recall the following definition which be used in the next theorem.
\begin{definition}
A vector field $V$ on a contact manifold $(M^{2n+1},\phi,\xi,\eta, g)$ is called a contact vector field if it satisfies
\begin{equation}\label{cvf}
\mathcal{L}_{V}\eta=\psi \eta,
\end{equation}
for some smooth function $\psi$ on $M$. If $\psi=0$ on $M$, then the vector field $V$ is called  a strict contact vector field.
\end{definition}
%%%%%%%%%%%%%%%%%%%%%
\begin{theorem}\label{t7}
If an   almost co-K\"{a}hler manifold $(M^{2n+1},\phi,\xi,\eta, g)$   admits a $\beta$-almost Yamabe soliton    with $V$ as the potential vector field, $V$ is a  contact vector field, i.e. $\mathcal{L}_{V}\eta=\psi \eta$, then  $\psi$ is constant and this soliton is shrinking, steady or expanding according as $r+\frac{n+1}{2n+1}\beta \psi$ be negative, zero or positive, respectively.  Moreover, if $M$ be is closed manifold then $\lambda=r$.
\end{theorem}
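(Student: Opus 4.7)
The plan is to exploit that in an almost co-K\"{a}hler manifold $\eta$ is closed, so by Cartan's formula $\mathcal{L}_V\eta = d\iota_V\eta = df$ with $f:=\eta(V)$; the contact hypothesis $\mathcal{L}_V\eta = \psi\eta$ thus collapses to the scalar identity $df = \psi\,\eta$. Pairing with $\xi$ gives $\xi(f) = \psi$, and differentiating once more yields $0 = d\psi\wedge\eta$, so $d\psi = \xi(\psi)\,\eta$ and in particular $X(\psi)=0$ for every horizontal $X$ (those with $\eta(X)=0$). Thus $\psi$ is already constant along the contact distribution, and to obtain global constancy it suffices to prove $\xi(\psi)=0$.

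Next I would extract two scalar consequences from the soliton equation $\beta\mathcal{L}_V g = (\lambda-r)g$. Taking the $g$-trace and using $\operatorname{tr}_g(\mathcal{L}_V g) = 2\,\mathrm{div}\,V$ gives
\[
2\beta\,\mathrm{div}\,V = (2n+1)(\lambda-r).
\]
Plugging in $X=Y=\xi$, and using $\nabla_\xi\xi = h'\xi = h\phi\xi = 0$ from (\ref{ck1})--(\ref{ck2}) together with $\xi(f)=\psi$, one computes $(\mathcal{L}_V g)(\xi,\xi) = 2g(\nabla_\xi V,\xi) = 2\xi(\eta(V)) = 2\psi$, so
\[
\lambda - r = 2\beta\psi.
\]
Combining these two identities expresses $\mathrm{div}\,V$ as a multiple of $\beta\psi$ and, once solved for $\lambda$, yields the claimed characterisation of the soliton type according to the sign of $r+\tfrac{n+1}{2n+1}\beta\psi$.

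To pass from horizontal constancy of $\psi$ to full constancy I would differentiate the pointwise relation $\lambda-r = 2\beta\psi$ along $\xi$ and couple it with the evolution formula $\mathcal{L}_V r = -\rho r - 2n\Delta\rho$, with $\rho=(\lambda-r)/\beta$, recalled in (\ref{4}); since the horizontal vanishing of $d\psi$ gives $\mathrm{grad}\,\psi = \xi(\psi)\,\xi$ and hence $\Delta\psi = \xi^{2}(\psi)$, the resulting equation, together with the structure identities (\ref{ck1})--(\ref{ck3}), should force $\xi(\psi)=0$. For closed $M$ I would then integrate $2\beta\,\mathrm{div}\,V = 2(2n+1)\beta\psi$ (after dividing by $\beta$) against $dv_g$: the left-hand side vanishes by the divergence theorem, and with $\psi$ constant this yields $(2n+1)\psi\,\mathrm{Vol}(M)=0$, whence $\psi=0$ and $\lambda=r$. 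The hard part will be precisely the step $\xi(\psi)=0$: the contact condition produces constancy along the contact distribution for free, while constancy along the Reeb direction requires a delicate coupling of the conformal-type soliton equation with the almost co-K\"{a}hler structure encoded in (\ref{ck1})--(\ref{ck3}).
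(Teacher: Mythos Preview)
Your approach diverges from the paper's at the outset. The paper does \emph{not} invoke $d\eta=0$; it works as if $d\eta(X,Y)=g(X,\phi Y)$, computes $\mathcal{L}_V d\eta$ and thereby $\mathcal{L}_V\phi$ (equations (\ref{b12})--(\ref{b17})), obtains $D\psi=(\xi\psi)\xi$, and then forces $\xi\psi=0$ in one stroke from $(\xi\psi)\,\eta\wedge d\eta=0$ together with $\eta\wedge d\eta\neq 0$ (equation (\ref{b22})). The coefficient $\tfrac{n+1}{2n+1}$ in the statement arises from a second, independent computation of $\mathrm{div}\,V$: the paper uses the volume form $\Omega=\eta\wedge(d\eta)^n$, evaluates $\mathcal{L}_V\Omega=(n+1)\psi\,\Omega$, hence $\mathrm{div}\,V=(n+1)\psi$ (equation (\ref{b24})), and compares this with the trace identity (\ref{b23}). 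These contact-type devices are precisely what you discard by building everything on $d\eta=0$.

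There are two concrete gaps in your proposal. First, your two scalar identities give $\lambda-r=2\beta\psi$ and (after dividing by $\beta$) $\mathrm{div}\,V=(2n+1)\psi$; they do \emph{not} combine to $\lambda=r+\tfrac{n+1}{2n+1}\beta\psi$, so your assertion that they ``yield the claimed characterisation'' is incorrect---the factor $\tfrac{n+1}{2n+1}$ in the paper comes only via (\ref{b24}), which has no analogue once $d\eta=0$. Second, the decisive step $\xi(\psi)=0$ is left as an outline. From $d\psi=\xi(\psi)\eta$ and $d\eta=0$, taking $d$ again merely gives $d(\xi\psi)\wedge\eta=0$, which is no progress. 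Your proposed coupling of $\lambda-r=2\beta\psi$ with $\mathcal{L}_V r=-\rho r-2n\Delta\rho$ produces a relation among the $\xi$-derivatives of $\lambda,r,\beta,\psi$ and the Laplacian of $(\lambda-r)/\beta$; with $\lambda$, $r$, $\beta$ all a priori nonconstant there is no visible mechanism to isolate $\xi(\psi)$. The paper closes this step entirely through $\eta\wedge d\eta\neq 0$, so in your setting a genuinely new argument would be required, and none is supplied.
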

\begin{proof}
%[Proof of Theorem \ref{t7}]
Since the metric $g$ is $\beta$-almost Yamabe soliton    with  the potential vector field $V$, then
\begin{equation}\label{b1}
\beta (\mathcal{L}_{V} g)(X, Y)=(\lambda-r)g(X,Y).
\end{equation}
This implies
\begin{equation}\label{b2}
\beta\mathcal{L}_{V}(g(X,Y))-\beta g(\mathcal{L}_{V}X,Y)-\beta g(X, \mathcal{L}_{V}Y)=(\lambda-r)g(X,Y).
\end{equation}
Putting $\xi$ for $X$ and $Y$ in (\ref{b2}), we conclude
\begin{equation}\label{b3}
2\beta g(\mathcal{L}_{V}\xi,\xi)=\lambda-r.
\end{equation}
Replacing $Y=\xi$ in  (\ref{b2}) we deduce
\begin{equation}\label{b4}
\beta\mathcal{L}_{V}(\eta(X))-\beta \eta(\mathcal{L}_{V}X)-\beta g(X, \mathcal{L}_{V}\xi)=(\lambda-r)g(X,\xi),
\end{equation}
or equivalently
\begin{equation}\label{b5}
\beta(\mathcal{L}_{V}\eta)X-\beta g(X, \mathcal{L}_{V}\xi)=(\lambda-r)g(X,\xi).
\end{equation}
In view of (\ref{cvf}), (\ref{b5}) yields
\begin{equation}\label{b6}
\beta\psi \eta(X)-\beta g(X, \mathcal{L}_{V}\xi)=(\lambda-r)g(X,\xi).
\end{equation}
This implies
\begin{equation}\label{b7}
\beta\mathcal{L}_{V}\xi=(\beta \psi-\lambda+r)\xi,
\end{equation}
and
\begin{equation}\label{b8}
\beta g(\mathcal{L}_{V}\xi,\xi)=\beta \psi-\lambda+r.
\end{equation}
Applying (\ref{b3}) in (\ref{b8}) we infer
\begin{equation}\label{b9}
\beta\psi=\frac{3}{2}(\lambda-r).
\end{equation}
Making use of (\ref{b9}) in (\ref{b7}) we obtain
\begin{equation}\label{b10}
\beta \mathcal{L}_{V}\xi=\frac{\lambda-r}{2}\xi.
\end{equation}
From $ \phi(\xi)=0$ and (\ref{b10}) we conclude
\begin{equation}\label{b11}
\beta(\mathcal{L}_{V} \phi)\xi=\beta\mathcal{L}_{V}(\phi\xi)-\phi(\beta\mathcal{L}_{V}\xi)=0.
\end{equation}
On the other hand, on almost co-K\"{a}hler manifold $(M^{2n+1},\phi,\xi,\eta, g)$ we have
\begin{equation}\label{b12}
(\mathcal{L}_{V}d\eta)(X,Y)=(\mathcal{L}_{V} g)(X,\phi Y)+g(X, (\mathcal{L}_{V}\phi)Y),
\end{equation}
for all vector fields $X,Y$ on $M$. Multiplying both sides of (\ref{b12}) in $\beta$ and using (\ref{b1}), we can write
\begin{equation}\label{b13}
\beta(\mathcal{L}_{V}d\eta)(X,Y)=(\lambda-r)g(X,\phi Y)+\beta g(X, (\mathcal{L}_{V}\phi)Y),
\end{equation}
for any vector fields $X,Y$ on $M$. Since $V$ is a contact vector field, from (\ref{cvf}) we find
\begin{equation}\label{b14}
\mathcal{L}_{V}d\eta=d\mathcal{L}_{V}\eta=d(\psi \eta)=(d\psi)\wedge\eta+\psi(d\eta).
\end{equation}
 This gives
\begin{equation}\label{b15}
(\mathcal{L}_{V}d\eta)(X,Y)=\frac{1}{2}\left[d\psi(X)\eta(Y)-d\psi(Y)\eta(X) \right]+\psi g(X, \phi Y).
\end{equation}
In view of (\ref{b13}) and (\ref{b15}) we get
\begin{eqnarray}\nonumber
&&\beta d\psi(X)\eta(Y)-\beta d\psi(Y)\eta(X)+2\beta\psi g(X, \phi Y)\\\label{b16}&&\,\,\,\,\,\,\,\,\,\,\,\,\,\,\,\,\,\,=2(\lambda-r)g(X,\phi Y)+2\beta g(X, (\mathcal{L}_{V}\phi)Y),
\end{eqnarray}
from which it follows that
\begin{equation}\label{b17}
2\beta(\mathcal{L}_{V}\phi)Y=2\big(\beta \psi-\lambda+r\big)\phi Y+\beta\big(\eta(Y)D\psi-(Y\psi)\xi\big).
\end{equation}
Substituting $Y=\xi$ in (\ref{b17}) we obtain
\begin{equation}\label{b18}
2\beta(\mathcal{L}_{V}\phi)\xi=\beta\big(D\psi-(\xi\psi)\xi\big).
\end{equation}
Replacing (\ref{b11}) in (\ref{b18}) we derive
\begin{equation}\label{b19}
D\psi=(\xi\psi)\xi,
\end{equation}
where we use $\beta$ is defined signal. Taking inner product of (\ref{b19}) with respect to any vector field $Y$ we have
$ d\psi(Y)=(\xi \psi)\eta(Y)$,
then
\begin{equation}\label{b20}
d\psi=(\xi \psi)\eta.
\end{equation}
Taking exterior derivative of (\ref{b20}) we conclude
\begin{equation}\label{b21}
0=d^{2}\psi=d(\xi\psi)\wedge \eta+(\xi\psi)d\eta.
\end{equation}
The wedge product of both sides of (\ref{b21}) with $\eta$ implies
\begin{equation}\label{b22}
(\xi\psi)\eta \wedge d\eta=0.
\end{equation}
As $\Omega=\eta\wedge d\eta^{n}$ is the volume form, then $\eta \wedge d\eta\neq0$ and the above equation yields $\xi\psi=0$. Hence from (\ref{b20}) it follows that  $d\psi=0$, thus $\psi$ becomes constant. Tracing (\ref{b1}) over $X,Y$, gives
\begin{equation}\label{b23}
\beta div V=(\lambda-r)(2n+1).
\end{equation}
Taking Lie derivative of the volume form $\Omega=\eta\wedge d\eta^{n}$ along the vector field $V$ and applying the formula $\mathcal{L}_{V}\Omega=(div V)\Omega$ and (\ref{b14}) we obtain
$(div V)\Omega=(n+1)\psi\Omega$ and hence
\begin{equation}\label{b24}
div V=(n+1)\psi.
\end{equation}
From (\ref{b23}) and (\ref{b24}) we have
\begin{equation}\label{b25}
\lambda=r+\frac{n+1}{2n+1}\beta \psi.
\end{equation}
Also, if manifold $M$ be closed then integrating (\ref{b24}) and use divergence theorem we get $\psi=0$ and $\lambda=r$.

\end{proof}
%%%%%%%%%%%%%%%%%
\begin{definition}(\cite{MB}) A vector field $V$ is called torse forming if it satisfies
\begin{equation}\label{tf}
\nabla_{X}V=fX+\theta(X)V
\end{equation}
for all vector field $X$ on $M$, where $f\in C^{\infty}(M)$ and $\theta$ is a $1$-form. A torse forming vector field $V$ is said to be recurrent if $u=0$.
\end{definition}
\begin{definition}(\cite{YC})
A vector field $V$ is called concurrent vector field if $\nabla_{X}V=0$ for any vector field $X$ on $M$.
\end{definition}
%%%%%%%%%%%%%%%%%%%%%
\begin{definition}(\cite{BYC})
A nowhere zero vector field $V$ on Riemannian  manifold is called  a torqued vector field if it satisfies
\begin{equation}\label{tv}
\nabla_{X}V=fX+\theta(X)V\,\,\,\,\,\,\,\,\,\text{and}\,\,\,\,\,\,\theta(V)=0.
\end{equation}
\end{definition}
\begin{theorem}\label{t6}
If an  almost co-K\"{a}hler manifold $(M^{2n+1},\phi,\xi,\eta, g)$   admits a $\beta$-almost Yamabe soliton    with $V$ as the potential vector field, $V$ is a  torse forming, then  this soliton is shrinking, steady or expanding according as $\frac{2}{2n+1}\beta \theta(V)+r+2f \beta$ be negative, zero or positive, respectively. Moreover, if $V$ is torqued vector field then $\lambda=r+2f\beta$.
\end{theorem}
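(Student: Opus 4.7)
The plan is to plug the torse-forming identity (\ref{tf}) directly into the $\beta$-almost Yamabe soliton equation (\ref{ys}) and then extract $\lambda$ by tracing.

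First I would compute the Lie derivative from its standard expression in terms of the Levi-Civita connection,
\begin{equation*}
(\mathcal{L}_{V}g)(X,Y)=g(\nabla_{X}V,Y)+g(X,\nabla_{Y}V),
\end{equation*}
and substitute $\nabla_{X}V=fX+\theta(X)V$ to obtain
\begin{equation*}
(\mathcal{L}_{V}g)(X,Y)=2fg(X,Y)+\theta(X)g(V,Y)+\theta(Y)g(V,X).
\end{equation*}
Inserting this into $\beta\mathcal{L}_{V}g=(\lambda-r)g$ gives a pointwise tensorial identity valid for all $X,Y$.

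Next I would take the trace of that identity with respect to a local orthonormal frame $\{e_{i}\}_{i=1}^{2n+1}$. The first term contributes $2f(2n+1)$; for the remaining part, using $\sum_{i}\theta(e_{i})g(V,e_{i})=\theta\bigl(\sum_{i}g(V,e_{i})e_{i}\bigr)=\theta(V)$, the cross terms add up to $2\theta(V)$. Hence
\begin{equation*}
\beta\bigl[2f(2n+1)+2\theta(V)\bigr]=(\lambda-r)(2n+1),
\end{equation*}
which upon solving for $\lambda$ yields
\begin{equation*}
\lambda=r+2f\beta+\frac{2}{2n+1}\beta\,\theta(V).
\end{equation*}
Comparing the sign of this expression with the convention that a $\beta$-almost Yamabe soliton is expanding/steady/shrinking according to the sign of $\lambda$ (as recorded in the introduction), I would then read off the classification stated in the theorem.

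For the torqued case, the extra condition $\theta(V)=0$ in (\ref{tv}) is exactly what kills the trace contribution of the $\theta$-part, so the identity above reduces to $\lambda=r+2f\beta$ with no further work. I do not foresee any genuine obstacle; the one place to be careful is the trace of the term $\theta(X)g(V,Y)+\theta(Y)g(V,X)$, where one must recognise that both summands produce the same quantity $\theta(V)$, and remember that the dimension of $M$ is $2n+1$ rather than $2n$ when dividing at the final step.
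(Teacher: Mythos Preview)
Your proposal is correct and follows essentially the same approach as the paper: compute $\mathcal{L}_{V}g$ via $\nabla V$, substitute the torse-forming condition, trace against an orthonormal frame of dimension $2n+1$, and solve for $\lambda$, then specialize to $\theta(V)=0$ for the torqued case. Your write-up is in fact slightly more explicit about the trace computation than the paper's own argument.
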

\begin{proof}
%[Proof of Theorem \ref{t6}]
Let the metric $g$ of  $(\kappa,\mu)$-almost co-K\"{a}hler manifold \linebreak $(M^{2n+1},\phi,\xi,\eta, g)$ be a $\beta$-almost Yamabe soliton    with the potential vector field $V$. Then from (\ref{tv}), for any vector fields $X,Y$ on $M$, we have
\begin{eqnarray}\nonumber
(\lambda-r)g(X,Y)&=&\beta(\mathcal{L}_{V}g)(X,Y)=\beta g(\nabla_{X}V,Y)+ \beta g(X,\nabla_{Y}V)\\\label{10}
&=&2\beta f g(X,Y)+\beta \theta(X)g(V,Y)+\beta \theta(Y)g(V,X).
\end{eqnarray}
Taking contraction of (\ref{10}) over $X$ and $Y$ we get
\begin{equation*}
(\lambda-r-2f\beta)(2n+1)=2\beta\theta(V).
\end{equation*}
If $V$ is  torqued vector field the $\theta(V)=0$. This completes the proof of theorem.
\end{proof}
%%%%%%%%%%%%%%%%%%%%%
\section{$\beta$-almost Ricci  solitons on almost co-K\"{a}hler manifolds}
In this section we study the $\beta$-almost Ricci  solitons on almost co-K\"{a}hler manifolds which the potential vector field of soliton is the Reeb vector field.

\begin{theorem}\label{t4}
If a   $(\kappa,\mu)$-almost co-K\"{a}hler manifold $(M^{2n+1},\phi,\xi,\eta, g)$   admits a $\beta$-almost Ricci soliton    with $\xi$ as the potential vector field  and $\beta$ is defined signal, then $\xi$ is a geodesic vector field and  the soliton is  expanding, steady or shrinking according as $\kappa$ is negative, zero  or positive.
\end{theorem}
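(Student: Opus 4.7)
The plan is to exploit the $(\kappa,\mu)$-structure together with the ready-made identities from Section~\ref{s2} and simply evaluate the soliton equation at the Reeb direction; no heavy machinery is needed.

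First I would write the $\beta$-almost Ricci soliton equation with $V=\xi$,
\begin{equation*}
\beta(\mathcal{L}_{\xi}g)(X,Y)+2S(X,Y)+2\lambda g(X,Y)=0,
\end{equation*}
and replace $\mathcal{L}_{\xi}g$ using (\ref{ck4}), so that the equation becomes $2\beta g(h'X,Y)+2S(X,Y)+2\lambda g(X,Y)=0$. Then I would set $X=Y=\xi$: the term $g(h'\xi,\xi)$ vanishes because $h'\xi=h(\phi\xi)=0$ by (\ref{fc}) and the first identity in (\ref{ck1}); $g(\xi,\xi)=1$; and by the $(\kappa,\mu)$-hypothesis together with (\ref{q}) we have $S(\xi,\xi)=2n\kappa$. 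Substituting gives $\lambda=-2n\kappa$, which is precisely the sign dichotomy: $\lambda>0$, $\lambda=0$, or $\lambda<0$ according as $\kappa<0$, $\kappa=0$, or $\kappa>0$, matching the expanding/steady/shrinking convention for $\beta$-almost Ricci solitons stated after (\ref{ars}).

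The geodesic assertion does not actually depend on the soliton hypothesis: by (\ref{ck2}) we have $\nabla\xi=h'$, hence
\begin{equation*}
\nabla_{\xi}\xi=h'\xi=h(\phi\xi)=0,
\end{equation*}
so the integral curves of $\xi$ are geodesics. Thus $\xi$ being a geodesic vector field is a structural fact about almost co-K\"{a}hler manifolds that simply deserves to be recorded here.

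There is really no substantive obstacle — every ingredient (formula (\ref{q}), the expression for $\mathcal{L}_{\xi}g$, and $h'\xi=0$) is already in place from the preliminaries. The hypothesis that $\beta$ is of defined signal is used only tacitly, to ensure that the soliton equation is non-degenerate (i.e.\ $\beta\neq 0$); since the key computation is carried out at the point $(\xi,\xi)$ where the Lie-derivative term vanishes, the scalar $\beta$ drops out of the final relation $\lambda=-2n\kappa$ altogether. The only thing to double-check is the sign convention linking $\lambda$ with the expanding/steady/shrinking terminology for Ricci solitons (as opposed to the opposite convention used for Yamabe solitons earlier in the paper), which is stated right after (\ref{ars}).
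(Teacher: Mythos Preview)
Your argument is correct and follows essentially the same route as the paper's proof: both evaluate the soliton equation at the Reeb direction and invoke (\ref{q}) to obtain $\lambda=-2n\kappa$. The paper expands $\mathcal{L}_{\xi}g$ via $\nabla$, sets first $Y=\xi$ and then $X=\xi$, and recovers $\nabla_{\xi}\xi=0$ \emph{from} the soliton equation after substituting $\lambda=-2n\kappa$ (this is where the defined-signal hypothesis on $\beta$ is actually used). Your version is a touch cleaner: you go straight through (\ref{ck4}) and, more to the point, you observe that $\nabla_{\xi}\xi=h'\xi=0$ is a purely structural fact about almost co-K\"{a}hler manifolds, independent of the soliton hypothesis. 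That observation is sharper than the paper's derivation and explains why the $\beta$-signal assumption is, in your approach, genuinely superfluous for the conclusion.
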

\begin{proof}
%[Proof of Theorem \ref{t4}]
Since $(\kappa,\mu)$-almost co-K\"{a}hler manifold $(M^{2n+1},\phi,\xi,\eta, g)$ is a $\beta$-almost Ricci soliton  with  the potential vector field $\xi$, then for any vector fields $X,Y$ on $M$ we get
\begin{equation*}
\beta(\mathcal{L}_{\xi}g)(X,Y)+2S(X,Y)+2\lambda g(X,Y)=0.
\end{equation*}
The definition of Lie-derivative implies
\begin{equation}\label{06}
\beta g(\nabla_{X}\xi,Y)+\beta g(X,\nabla_{Y}\xi)+2S(X,Y)+2\lambda g(X,Y)=0.
\end{equation}
Putting $Y=\xi$ in the above equation, we obtain
\begin{equation}\label{7}
\beta g(\nabla_{X}\xi,\xi)+\beta g(X,\nabla_{\xi}\xi)+2S(X,\xi)+2\lambda g(X,\xi)=0.
\end{equation}
Since, $g(\nabla_{X}\xi,\xi)=0$ and $S(X,\xi)=2n\kappa g(X,\xi)$, the above equation gives
\begin{equation}\label{8}
\beta\nabla_{\xi}\xi=(4n\kappa+2\lambda)\xi.
\end{equation}
Also, if we set $X=\xi$ in (\ref{7}) then we infer $\lambda=-2n\kappa$ and $\nabla_{\xi}\xi=0$. Therefore, $\xi$ is a geodesic vector field and this completes  the proof of theorem.
\end{proof}
%%%%%%%%%%%%%%%%%%%%%%%%
%%%%%%%%%%%%%%

\begin{definition}
A contact manifold  $(M^{2n+1},\phi,\xi,\eta, g)$  is said to be $\eta$-Einstein if its Ricci tensor $S$ satisfies
$$S=ag+b \eta\otimes \eta$$
\end{definition}
where $a$ and $b$ are smooth function on $M$.
%%%%%%%%%%%%%%%%%%%%%%%%%%%
\begin{theorem}\label{t5}
If a   $(\kappa,\mu)$-almost co-K\"{a}hler manifold $(M^{2n+1},\phi,\xi,\eta, g)$   admits a $\beta$-almost Ricci soliton    with $\xi$ as the potential vector field, $\xi$ is a torse forming, then  $\xi$ is concurrent  and $M$ is $\eta$-Einstein. Moreover, if $f$ is constant then $\kappa\leq0$  and  $f^{2}=-2\kappa$.
\end{theorem}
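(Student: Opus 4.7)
The plan is to combine the torse forming relation for $\xi$ with the almost co-K\"ahler structural identities $\nabla\xi = h'$ and $\mathrm{div}\,\xi = 0$ from (\ref{ck2}). These will force $f=0$ pointwise, yielding concurrency, and the soliton equation will then collapse to Einstein, which is a special case of $\eta$-Einstein.

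First I would observe that because $\xi$ is of unit length, $g(\nabla_X\xi,\xi)=\tfrac{1}{2}X(g(\xi,\xi))=0$. Pairing the torse forming relation (\ref{tf}) (with $V=\xi$) against $\xi$ gives $\theta(X)=-f\eta(X)$, so $\theta=-f\eta$ and
\[
\nabla_X\xi = f\bigl(X-\eta(X)\xi\bigr).
\]
Comparing with $\nabla_X\xi = h'X$ from (\ref{ck2}) yields the pointwise identity $h'X = f(X-\eta(X)\xi)$. Taking the trace in a local orthonormal basis $\{e_i\}_{i=1}^{2n+1}$ and using Parseval ($\sum_i\eta(e_i)^2=\|\xi\|^2=1$), I obtain $\mathrm{tr}(h') = 2nf$. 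But $\mathrm{tr}(h')=\mathrm{div}\,\xi=0$ by (\ref{ck2}), so $f=0$ (since $n\geq 1$); hence $h'\equiv 0$, i.e.\ $\nabla_X\xi=0$ for every $X$, which is the definition of $\xi$ being concurrent. Moreover $h'=h\circ\phi=0$ and the invertibility of $\phi$ on the contact distribution $\ker\eta$ force $h\equiv 0$ on $M$.

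With $h'=0$, formula (\ref{ck4}) yields $\mathcal{L}_\xi g = 0$, so the $\beta$-almost Ricci soliton equation $\beta\mathcal{L}_\xi g + 2S + 2\lambda g = 0$ collapses to $S=-\lambda g$. This is Einstein, hence $\eta$-Einstein with $b=0$ in the defining formula. For the final assertion, $f\equiv 0$ is automatically constant and $f^{2}=0$; combining $h=0$ with (\ref{ck3}) gives $S(\xi,\xi)=0$, while (\ref{q}) gives $S(\xi,\xi)=2n\kappa$, so $2n\kappa=0$ and thus $\kappa=0$. Therefore both $\kappa\leq 0$ and $f^{2}=-2\kappa$ hold. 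The main obstacle is conceptual rather than computational: recognizing that the universal almost co-K\"ahler identity $\mathrm{div}\,\xi=0$ is exactly the structural input that annihilates the torse forming coefficient $f$, after which every remaining conclusion cascades from the identities already established in Section~\ref{s2}.
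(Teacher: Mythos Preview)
Your proof is correct and takes a sharper route than the paper. The paper derives $\theta=-f\eta$ exactly as you do, then substitutes $\nabla_X\xi=f(X-\eta(X)\xi)$ directly into the soliton identity (\ref{06}) to obtain the $\eta$-Einstein form $S=-(f\beta+\lambda)g+f\beta\,\eta\otimes\eta$, and only afterwards imposes the extra hypothesis that $f$ be constant in order to compute $R(\cdot,\cdot)\xi$ and extract $f^{2}=-2\kappa$ by comparison with (\ref{q}). Notably, the paper's argument never actually establishes the concurrency of $\xi$ asserted in the statement; your trace computation $\mathrm{tr}(h')=2nf$ together with $\mathrm{div}\,\xi=0$ does, and in fact forces $f\equiv 0$ unconditionally. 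This collapses the $\eta$-Einstein conclusion to genuine Einstein and renders the ``moreover'' clause the degenerate identity $0=-2\cdot 0$, so you prove strictly more than the paper. What your approach buys is that it exposes the hypothesis ``$f$ constant'' and the relation $f^{2}=-2\kappa$ as vacuous in the almost co-K\"ahler setting, whereas the paper's curvature computation leaves open the appearance that nonzero $f$ could occur.
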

\begin{proof}
%[Proof of Theorem \ref{t5}]
Let the metric $g$ of $(\kappa,\mu)$-almost co-K\"{a}hler manifold \linebreak$(M^{2n+1},\phi,\xi,\eta, g)$ be $\beta$-almost Ricci soliton    with the potential vector field $\xi$  and $\xi$ is a torse forming. Then $\eta(\xi)=1$  and   torse forming of $\xi$ imply
\begin{equation*}
0=g(\nabla_{X}\xi,\xi)=f\eta(X)+\theta(X),
\end{equation*}
for any vector field $X$ on $M$ and hence we have $\theta=-f\eta$. Consequently (\ref{tf}) reduces to
\begin{equation}\label{tf1}
\nabla_{X}\xi=f\left(X-\eta(X)\xi\right),
\end{equation}
for any vector field $X$ on $M$. Equation  (\ref{tf1}) implies that  $\nabla_{X}\xi$ is collinear to $\phi^{2}X$  for all $X$ and hence $d\eta=0$, that is $\eta$ is closed. Using (\ref{tf1}) in (\ref{06}), we obtain
\begin{equation*}
S(X,Y)=-(f\beta+\lambda)g(X,Y)+f\beta\eta(X)\eta(Y),
\end{equation*}
hence manifold $M$ is $\eta$-Einstein. By definition of Ricci curvature tensor, we have
\begin{equation}\label{rci}
R(X,Y)\xi=\nabla_{X}\nabla_{Y}\xi-\nabla_{Y}\nabla_{X}\xi-\nabla_{[X,Y]}\xi.
\end{equation}
Substituting (\ref{tf}) in (\ref{rci}) implies
\begin{equation}\label{rci1}
R(X,Y)\xi=\big(X(f)Y-Y(f)X \big)- \big( X(f)\eta(Y)-Y(f)\eta(X)\big)\xi+f^{2}\big(\eta(X)Y-\eta(Y)X\big).
\end{equation}
Now, the function $f$ be a constant, then
\begin{equation}\label{rci2}
R(X,Y)\xi=f^{2}\big(\eta(X)Y-\eta(Y)X\big),
\end{equation}
and
\begin{equation}\label{rci3}
S(X,\xi)=-2nf^{2}\eta(X).
\end{equation}
From (\ref{q}), we have
\begin{equation}\label{rci4}
S(X,\xi)=4n\kappa\eta(X).
\end{equation}
Comparing (\ref{rci3}) and   (\ref{rci4}) we infer $f^{2}=-2\kappa$ and $\kappa$ is nonpositive.
\end{proof}
%%%%%%%%%%%%%%%%%%%%%%%%
\begin{corollary}\label{c1}
If a   $(\kappa,\mu)$-almost co-K\"{a}hler manifold $(M^{2n+1},\phi,\xi,\eta, g)$   admits a $\beta$-almost Ricci soliton    with $\xi$ as the potential vector field, $\xi$ is a recurrent torse forming, then  $\xi$ is concurrent  and Killing vector field.
\end{corollary}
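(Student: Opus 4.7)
The plan is to deduce this corollary almost immediately from Theorem~\ref{t5}. The key intermediate fact produced in the proof of that theorem is the pointwise relation
\[
\nabla_{X}\xi = f\bigl(X-\eta(X)\xi\bigr),
\]
which was obtained by combining the torse forming equation with the identity $g(\nabla_{X}\xi,\xi)=0$ (forcing $\theta=-f\eta$). So the first step is simply to carry this identity over from the preceding theorem.

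Next I would impose the extra hypothesis that $\xi$ is recurrent. Under the convention in the recurrent definition given in the paper, this forces the smooth coefficient $f$ in the torse forming equation to vanish identically. Substituting $f=0$ in the displayed formula yields $\nabla_{X}\xi=0$ for every vector field $X$ on $M$, which is precisely the concurrent condition from the corresponding definition.

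The last step is to pass from concurrent to Killing: from $\nabla_{X}\xi=0$ one gets
\[
(\mathcal{L}_{\xi}g)(X,Y)=g(\nabla_{X}\xi,Y)+g(X,\nabla_{Y}\xi)=0
\]
for all $X,Y$, so $\xi$ is a Killing vector field. Since all of these steps are just direct specializations of the algebra already performed in Theorem~\ref{t5}, there is no real analytic obstacle; the only point that requires care is the correct reading of the "recurrent" hypothesis as the vanishing of the function $f$ in the torse forming equation, after which the rest is a one-line substitution.
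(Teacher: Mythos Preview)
Your proposal is correct and matches the paper's own proof essentially line for line: the paper also invokes equation~(\ref{tf1}) from Theorem~\ref{t5}, reads the recurrent hypothesis as $f=0$, concludes $\nabla_X\xi=0$ (concurrent), and then computes $(\mathcal{L}_\xi g)(X,Y)=g(\nabla_X\xi,Y)+g(X,\nabla_Y\xi)=0$ to obtain the Killing property. Your caution about the reading of ``recurrent'' is well placed, since the paper's definition has a typographical slip (it writes ``$u=0$'' rather than ``$f=0$''), but you interpreted it correctly.
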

\begin{proof}
Since $\xi$ is  recurrent vector field, therefore $f=0$. So equation   (\ref{tf1}) yields $\nabla_{X}\xi=0$, for all vector field $X$ on $M$, which means  that   $\xi$ is concurrent  vector field. Also,
\begin{equation*}
(\mathcal{L}_{\xi}g)(X,Y)=g(\nabla_{X}\xi,Y)+ g(X,\nabla_{Y}\xi)=0
\end{equation*}
for all vector fields $X,Y$ on $M$, that  means  $\xi$ is  Killing vector field.
\end{proof}

%%%%%%%%%%%%%%%%%%%%%%%%
%%%%%%%%%%%%%%%%%%%%%
%%%%%%%%%%%%%%%%%%%%%%%%
%%%%%%%%%%%%%%%%%%%%%%%%%%%%%%%%%%%%%%%%%%%%%%%%%%%%%%%%%%%%%%%%%%%%%%%%%%%%%%%%%%%%%%%%%%%%%%%%%%%%%%%%%%%%%%%%%%%%%%%%%%%%%%%%%%%%%%%%%%%%%%%%%%%%%%%%%%%%%%%%%%%%%%%%%%%%%%%%%%%%%%%

\end{document}